\theoremstyle{plain}
\newtheorem{lemma}{Lemma}[section]
\newtheorem{theorem}[lemma]{Theorem}
\newtheorem*{theorem-nonum}{Theorem}
\theoremstyle{remark}
\theoremstyle{definition}
\title{No perfect triangle is isosceles}
\author[M. Makhul]{Mehdi Makhul}
\address[Mehdi Makhul]{Fakult\"{a}t f\"{u}r Mathematik Universit\"{a}t Wien, Austria}
\email{mmakhul@risc.jku.at}
\subjclass[2010]{52C10, 14G05} 
\keywords{Perfect triangle, Quartic Diophantine equation}
\begin{document}
	
\begin{abstract}
A \emph{perfect triangle} is a triangle with rational sides, medians, and
area. In this article,  we use a similar strategy due to Pocklington to
show that if $\Delta$ is a perfect triangle, then it cannot be an
isosceles triangle. It gives a partial answer to a question of Richard Guy, who asked whether any perfect triangles exist. No example has been found to date. It is widely believed that such a triangle does not exist.
\end{abstract}

\date{}
\maketitle
\section{Introduction}
\label{intr}
A \emph{median} of a triangle is the line segment joining a vertex to the midpoint of the opposite side. We say that a side, or a median, of a triangle is \emph{rational} if its length is a rational number. Finding a triangle with rational sides, medians and area was asked as an open problem by Richard Guy in \cite[D$21$]{Guy2004}. Such a triangle is called a  \emph{perfect triangle}. Various research has been done towards this question, but to date the problem remains unsolved (see, e.g., \cite{Shahrina2017} for a survey). If we do not require the area to be rational, there are infinitely many solutions. Euler gave a parametrization of such "rational triangles", in which "all three medians are rational", see \cite{Buchholz2002}, however there are examples of triangles with three integer sides and three integer medians that are not given by the Euler parametrization. Buchholz in \cite{Buchholz2002} showed that every rational triangle with rational medians corresponds to a point on a one parameter elliptic curve. Buchholz and Rathbun~\cite{Buchholz1997} also showed the existence of infinitely many \emph{Heron triangles} with two rational medians, where a Heron triangle is a triangle that has side lengths and area that are all rationals. 

Similar problems have been studied in other settings, and of particular relevance to this work is the existence of a \emph{perfect square triangle}, namely a triangle whose sides are perfect squares and whose angle bisectors are integers. Lucas~\cite{Luca2018}  has shown that the existence of a perfect square triangle is equivalent the existence of a \emph{perfect cuboid}, i.e. a rectangular box with all sides, face diagonals, and main diagonals are integers. Up to now, it is known \cite{Makhul2020} \cite{Zelator2008} that there is no perfect right triangle i.e a perfect triangle with a right angle. A current result in this direction is the following result~\cite{Makhul2020}. 
\begin{theorem}\label{thm:perfect-triangle}
Given $0<\theta < \pi$ where $\theta \not= \frac{\pi}{2}$, there are, up to similarity, finitely many Heron triangles with an angle $\theta$ and with two rational medians.
\end{theorem}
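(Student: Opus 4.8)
The plan is to bound the number of such similarity classes by the number of rational points on an explicit projective curve $C_\theta$ attached to $\theta$, and then to conclude by Faltings' theorem. Write $\theta$ for the angle between the sides of rational lengths $a$ and $b$; the third side $c$ satisfies $c^2=a^2+b^2-2ab\cos\theta$ and the area equals $\tfrac12 ab\sin\theta$. If the triangle is Heron, then $\cos\theta=\frac{a^2+b^2-c^2}{2ab}$ and $\sin\theta=\frac{2\,\mathrm{Area}}{ab}$ both lie in $\mathbb{Q}$, so if $\cos\theta\notin\mathbb{Q}$ or $\sin\theta\notin\mathbb{Q}$ there are no such triangles and we are done. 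Assume henceforth $\cos\theta,\sin\theta\in\mathbb{Q}$ (so $\sin\theta\neq0$). Eliminating $c$ by the law of cosines, the three median relations $4m_a^2=2b^2+2c^2-a^2$, $4m_b^2=2a^2+2c^2-b^2$, $4m_c^2=2a^2+2b^2-c^2$ become
\[
4m_a^2=a^2+4b^2-4ab\cos\theta,\qquad 4m_b^2=4a^2+b^2-4ab\cos\theta,\qquad 4m_c^2=c^2+4ab\cos\theta .
\]
When $\cos\theta\neq0$, none of the right-hand sides is forced to be a rational square merely because $a,b,c$ are rational; this is exactly what fails at $\theta=\tfrac{\pi}{2}$, where $m_c=c/2$, which is why that case is excluded. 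Since the involution $a\leftrightarrow b$ interchanges $m_a,m_b$ and fixes $m_c$ and $\theta$, it suffices to treat the two cases $m_a,m_b\in\mathbb{Q}$ and $m_a,m_c\in\mathbb{Q}$.

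In the first case, introduce homogeneous coordinates $[a:b:c:u:v]$ on $\mathbb{P}^4$ with $u=2m_a$ and $v=2m_b$, and let $C_\theta\subset\mathbb{P}^4$ be the common zero locus of the three quadrics
\[
c^2-a^2-b^2+2ab\cos\theta,\qquad u^2-a^2-4b^2+4ab\cos\theta,\qquad v^2-4a^2-b^2+4ab\cos\theta
\]
(in the second case replace the last one by $v^2-c^2-4ab\cos\theta$). These quadrics form a regular sequence, since each one introduces a new squared variable, so $C_\theta$ is a complete intersection curve. Rescaling a triangle multiplies $(a,b,c,u,v)$ by a scalar, so similar triangles give the same point of $\mathbb{P}^4$; every Heron triangle with angle $\theta$ and the two prescribed medians rational therefore yields a rational point of $C_\theta$ (after choosing signs for $m_a,m_b$), and distinct similarity classes yield disjoint sets of such points because a point recovers the ratios $a:b:c$. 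Hence the number of such similarity classes is at most $\#C_\theta(\mathbb{Q})$, and it suffices to prove that $C_\theta(\mathbb{Q})$ is finite.

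To compute the genus, observe that the first quadric defines a smooth plane conic $Q$, since its Gram matrix has determinant $\sin^2\theta\neq0$, and $Q$ carries the rational point $[1:0:1]$, so $Q\cong\mathbb{P}^1_{\mathbb{Q}}$. Fixing such an isomorphism, the remaining two quadrics restrict to binary quartic forms $g_1,g_2$ in the parameter, and $C_\theta$ is birational to the $(\mathbb{Z}/2)^2$-cover $u^2=g_1,\ v^2=g_2$ of $\mathbb{P}^1$. When $g_1$ and $g_2$ are squarefree and coprime this cover has degree $4$ with $8$ branch points, each contributing $2$ to the total ramification, so Riemann--Hurwitz gives $g(C_\theta)=5$ (equivalently, its three intermediate double covers have genera $1,1,3$). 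In particular $g(C_\theta)\geq2$, so Faltings' theorem yields $\#C_\theta(\mathbb{Q})<\infty$, and combining the two cases proves the theorem.

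The main obstacle is the proviso ``squarefree and coprime'': one has to control the degenerations of the family $\{C_\theta\}$, that is, determine the $\theta$ for which $g_1,g_2$ acquire a repeated or a common root, since then $C_\theta$ can split off a component of genus $0$ or $1$ on which Faltings gives nothing --- which is, morally, what goes wrong at $\theta=\tfrac{\pi}{2}$. Concretely, one computes the discriminants of $g_1,g_2$ and their resultant as polynomials in $\cos\theta$, checks that they vanish only for finitely many angles, and then treats each exceptional $\theta\neq\tfrac{\pi}{2}$ individually, verifying that the relevant curve still has geometric genus at least $2$ (or dominates one that does). The rest is routine; this degeneration bookkeeping is where the real work lies.
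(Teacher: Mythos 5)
A remark on provenance first: the present paper does not prove this theorem at all --- it is quoted as background from the author's earlier work \cite{Makhul2020} --- so there is no in-paper proof to compare yours against. On its own terms, your strategy (reduce to $\cos\theta,\sin\theta\in\mathbb{Q}$, eliminate $c$ by the law of cosines, view the two median conditions as a $(\mathbb{Z}/2)^2$-cover of the conic $c^2=a^2+b^2-2ab\cos\theta\cong\mathbb{P}^1$, compute genus $5$ by Riemann--Hurwitz, and invoke Faltings) is the right one, and it is essentially the approach of the cited reference. Your intermediate steps check out: the three restricted median quadrics are correct, the conic is smooth exactly because its Gram determinant is $\sin^2\theta\neq 0$ and carries the rational point $[1:0:1]$, the symmetry reduction to two cases of median pairs is legitimate, and the generic genus count ($8$ branch points, total ramification $16$, $2g-2=-8+16$) is right.

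Nevertheless, as written the argument has a genuine gap, and you have located it yourself: the whole proof hinges on $g_1$ and $g_2$ being squarefree and coprime, and you neither verify when this holds nor handle the exceptional angles. This is not dispensable bookkeeping. The theorem asserts finiteness for \emph{every} fixed $\theta\neq\pi/2$; if for some rational value of $\cos\theta$ the quartics acquire a repeated or common root, then $C_\theta$ can split off a component of genus $0$ or $1$, and such a component could a priori carry infinitely many rational points corresponding to honest non-degenerate triangles, in which case the statement would simply be false for that $\theta$. (The excluded value $\theta=\pi/2$ is precisely such a degeneration, which shows the phenomenon is real.) To close the gap you must (i) show that the discriminants of $g_1,g_2$ and their resultant are not identically zero as polynomials in $\cos\theta$, so that only finitely many rational values of $\cos\theta$ are exceptional; (ii) determine those values explicitly; and (iii) for each of them show that the residual low-genus components contribute only finitely many similarity classes of genuine triangles, e.g.\ because their points are degenerate ($abc=0$) or because the relevant genus-one quotient has rank zero. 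Until (i)--(iii) are carried out, what you have is a correct plan rather than a proof.
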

Recall that a triangle is called an \emph{isosceles} triangle if two of its sides have equal length. In this paper, we will explore the following result.
\begin{theorem}\label{thm:main}
No perfect triangle is isosceles.
\end{theorem}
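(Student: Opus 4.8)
\emph{Proof plan.} The approach is to reduce the statement to a computation of the rational points on an explicit elliptic curve, and to carry that computation out by a descent in the style of Pocklington. Suppose, for contradiction, that some perfect triangle is isosceles, say with two sides of length $p$ and base of length $q$; after rescaling we may take $p,q$ to be coprime positive integers, and the equilateral case $p=q$ is excluded immediately since there each median has irrational length $\tfrac12 p\sqrt3$. In an isosceles triangle the median to the base coincides with the altitude to the base and equals $\tfrac12\sqrt{4p^{2}-q^{2}}$, the two medians to the equal sides each equal $\tfrac12\sqrt{p^{2}+2q^{2}}$, and the area equals $\tfrac14\,q\sqrt{4p^{2}-q^{2}}$; in particular the area is automatically rational once the base median is. As $4p^{2}-q^{2}$ and $p^{2}+2q^{2}$ are integers, the triangle being perfect therefore forces
\[
4p^{2}-q^{2}=a^{2},\qquad p^{2}+2q^{2}=b^{2}\qquad(a,b\in\Z),
\]
with $0<q<2p$ coming from the triangle inequality. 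Dividing by $p^{2}$ and setting $r=q/p$, this says exactly that $4-r^{2}$ and $1+2r^{2}$ are both squares of rational numbers for some $r\in(0,2)$ (whence automatically $r\neq1$, since $4-1=3$).

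These two conditions on $r$ define a curve of genus one. Parametrizing the rational points of the conic $4-r^{2}=u^{2}$ by
\[
r=\frac{4s}{1+s^{2}},\qquad u=\frac{2(1-s^{2})}{1+s^{2}},
\]
the second condition becomes $1+2r^{2}=\dfrac{s^{4}+34s^{2}+1}{(1+s^{2})^{2}}$, so a perfect isosceles triangle produces a rational point on
\[
C\colon\quad w^{2}=s^{4}+34s^{2}+1
\]
with $r=\tfrac{4s}{1+s^{2}}\in(0,2)$. The curve $C$ has the rational point $(s,w)=(0,1)$, and a short computation---substitute $T=s^{2}$ to exhibit a conic, parametrize it, and impose that $T$ be a square---identifies $C$ over $\mathbb{Q}$ with the elliptic curve
\[
E\colon\quad y^{2}=x(x-1)(x+8),
\]
whose minimal discriminant $2^{10}\cdot3^{4}$ is supported only at $2$ and $3$. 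One checks that $E$ carries the rational $2$-torsion points $(0,0),(1,0),(-8,0)$ and the $4$-torsion points $(-2,\pm6),(4,\pm12)$, so that $E(\mathbb{Q})_{\mathrm{tors}}\cong\Z/4\Z\times\Z/2\Z$; under the identification above these eight points correspond to the rational points $(0,\pm1)$, $(\pm1,\pm6)$ and the two points at infinity of $C$, every one of which gives $r\in\{0,\pm2\}$---that is, a degenerate configuration ($q=0$ or $q=\pm2p$) rather than a genuine triangle.

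It remains to show that $E$ has Mordell--Weil rank zero; this is the heart of the matter, and it is here that the argument follows Pocklington's strategy. One runs a descent along the $2$-isogeny attached to a rational $2$-torsion point of $E$: after clearing denominators, the homogeneous spaces that appear are systems of two simultaneous quadratic equations in coprime integers of precisely the shape $4p^{2}-q^{2}=a^{2}$, $p^{2}+2q^{2}=b^{2}$ above. Arguing directly with such a system, parity forces $p$ odd and $q,a$ even, so $(a/2)^{2}+(q/2)^{2}=p^{2}$ is a primitive Pythagorean triple; writing this in parametric form, substituting into $p^{2}+2q^{2}=b^{2}$, factoring $b^{2}-p^{2}$, and playing off congruences modulo small powers of $2$ and $3$ (the only bad primes) shows that any nontrivial solution would give rise to a strictly smaller one. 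This infinite descent shows that the relevant Selmer group is exhausted by the image of $E(\mathbb{Q})_{\mathrm{tors}}$, so $\operatorname{rank}E(\mathbb{Q})=0$ and $E(\mathbb{Q})=E(\mathbb{Q})_{\mathrm{tors}}$. Hence the eight rational points listed above are all of $C(\mathbb{Q})$, none of them corresponds to a genuine triangle, and the desired contradiction follows.

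The one genuine obstacle is the rank-zero assertion: the reductions to the quartic $w^{2}=s^{4}+34s^{2}+1$ and thence to the Weierstrass model $E$ are routine, whereas organizing the simultaneous congruences into a clean infinite descent---the step carried out "similarly to the strategy of Pocklington"---is where the real work lies, and is also the reason the result is obtained for isosceles triangles rather than for perfect triangles in general.
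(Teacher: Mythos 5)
Your reduction is sound and runs parallel to the paper's: a rational isosceles triangle with coprime legs $p$ and base $q$ is perfect exactly when $4p^2-q^2$ and $p^2+2q^2$ are both perfect squares (the area condition is indeed automatic for isosceles triangles), and eliminating one of the two conditions by a Pythagorean-type parametrization leaves a one-parameter quartic. The paper parametrizes $p^2+2q^2=b^2$ first and lands on $k^2=4t^4-5t^2s^2+s^4$; you parametrize $4p^2-q^2=a^2$ first and land on $w^2=s^4+34s^2+1$. Either way the problem becomes: show that an explicit genus-one curve has only its obvious rational points.

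The gap is that you never prove this. The sentence about ``writing this in parametric form, substituting into $p^{2}+2q^{2}=b^{2}$, factoring $b^{2}-p^{2}$, and playing off congruences modulo small powers of $2$ and $3$'' is a promissory note, not a descent: you do not exhibit the smaller solution, identify which quantity strictly decreases, or explain why the congruence conditions close up into a contradiction. That step is the entire content of the paper. There, the quartic $k^2=4t^4-5t^2s^2+s^4$ is first converted, via the identity $\gamma^4+10\gamma^2(\alpha\beta)^2+9(\alpha\beta)^4=(4\alpha^4-\beta^4)^2$, into the equation $z^2=x^4+10x^2y^2+9y^4$, and the descent on the latter takes real work: a reduction modulo $3$ forcing $3\mid xy$, the rewriting $z^2=(x^2-3y^2)^2+(4xy)^2$, a careful proof that $(x^2-3y^2,4xy)=1$, the four-number lemma applied to $xy=mn_1$, the splitting of $(a^2+4b^2)c^2=(a^2+3b^2)d^2$ into the two coprime square conditions $a^2+(2b)^2=d^2$ and $a^2+3b^2=c^2$, one parity branch terminating in the classical equation $c^2=x_1^4+x_1^2y_1^2+y_1^4$, and the other producing a smaller solution of $d^2=x_1^4+10x_1^2y_1^2+9y_1^4$ with $x_1y_1<xy$. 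Nothing equivalent for your curve $w^2=s^4+34s^2+1$ --- for instance a worked $2$-isogeny descent on $y^2=x(x-1)(x+8)$ with the homogeneous spaces actually shown to be insolvable --- appears in your write-up, and you acknowledge as much; as it stands the proposal establishes the reduction but not the theorem.
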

In the next section, we will see the relation between the isosceles perfect triangle problem and a certain quartic Diophantine equation of the form $ax^4+bx^2y^2+cy^4$ and we use a similar strategy due to Pocklington \cite{Polcklington1912} \cite{Sierpinski1988}, who used to show that there are no four different squares which form an arithmetical progression. The solvability of the Diophantine equation $x^4+bx^2y^2+y^4$ has been considered by mathematicians from Fermat and Euler. There are nice results in this direction see, e.g., \cite{Brown1989} \cite{Cohn1994} \cite{Cohn1983}.

\subsection{From Perfect triangles to quartic Diophantine equation}
\label{sec:FPTQDE}
We now briefly describe our strategy for proving Theorem \ref{thm:main}. Let $\Delta$ be a rational isosceles triangle, with legs $A$ and base~$B$. Without loss of generality, we may assume~$A$ and $B$ are coprime positive integers. Let~$h$ and~$l$ denote the median lengths of $\Delta$; see Figure~\ref{fig:isose-angle}. By the formulas expressing medians in terms of edges,
we have
\begin{equation*}
\begin{aligned}
h^2&=A^2-(B^2/4), \\ 
4l^2&=2B^2+A^2.
\end{aligned}
\end{equation*}
Consider the change of variable $2l=w$, then we get $w^2=2B^2+A^2$. By Lemma~\ref{lm:1} we have $A=m^2-2n^2$ and $B=2mn$. In particular, since $(A,B)=1$ we have~$(m, n)=1$. For if $(m,n)=d>1$, then $m=dm_1$ and $n=dn_1$, which implies $A=d^2(m_1^2-2n_1^2)$ and $B=2d^2m_1n_1$, therefore $(A,B)\ge d$ which is a contradiction.\footnote{Since $(A,B)=1$ either both $m$ and $n$ are odd or $m$ odd $n$ even.} By substituting the expressions for $A$ and $B$ in the equation $h^2=A^2-\frac{B^2}{4}$ we obtain 
\[
h^2=(m^2-2n^2)^2-(mn)^2=m^4+4n^4-5m^2n^2.
\]
\begin{figure}
	\begin{tikzpicture}
	\filldraw [black] (0,0) node[anchor=west]{} circle (2pt);
	\filldraw [black] (4,0) node[anchor=west]{} circle (2pt);
	\filldraw [black] (2,3) node[anchor=west]{} circle (2pt);
	\filldraw [black] (2,0) node[anchor=west]{} circle (2pt);
	\filldraw [black] (1,1.5) node[anchor=west]{} circle (2pt);
	\filldraw [black] (3,1.5) node[anchor=west]{} circle (2pt);
	\draw[black, thick] (2,3) -- (4,0);
	\draw[black, thick] (0,0) -- (4,0);
	\draw[black, thick] (0,0) -- (2,3);
	\node[circle, anchor=north] (n1) at (2,0) {$B$};
	\node[circle, anchor=east] (n1) at (1,1.5) {$A$};
	\node[circle, anchor=west] (n1) at (3,1.75) {$A$};
	\node[circle, anchor=west] (n1) at (1,1.5) {$l$};
	\node[circle, anchor=west] (n1) at (2,0.5) {$h$};
	\node[circle, anchor=west] (n1) at (2.6,1.2) {$l$};
	\draw[gray, thick] (0,0) -- (3,1.5);
	\draw[gray, thick] (4,0) -- (1,1.5);
	\draw[gray, thick] (2,3) -- (2,0);
	\end{tikzpicture}
	\caption{An isosceles triangle with side lengths $A$, and $B$, and median lengths $l,h$.}\label{fig:isose-angle}
\end{figure}
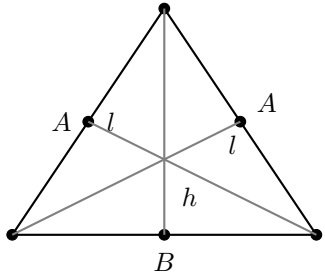
Therefore, the existence of an isosceles triangle implies the existence of a non-trivial solution of the equation $k^2=4t^4-5t^2s^2+s^4$ in positive integers. A solution $(t_0, s_0, k_0)$ of the latter equation is said to be non-trivial if~$t_0s_0k_0\not=0$.
\begin{theorem}\label{thm:main-equation}
The equation $k^2=4t^4-5t^2s^2+s^4$ has only trivial solutions.
\end{theorem}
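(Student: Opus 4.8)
The plan is to argue by Fermat descent, in the spirit of Pocklington's treatment of four squares in arithmetic progression. First I would reduce to a minimal solution. If a nontrivial solution exists, then dividing out $d=\gcd(s,t)$ pulls a factor $d^{4}$ out of $4t^{4}-5t^{2}s^{2}+s^{4}$, so $d^{2}\mid k$, and we obtain a nontrivial solution with $\gcd(s,t)=1$; replacing $(t,s,k)$ by $(|t|,|s|,|k|)$ we may take all three positive. Since $k^{2}=(4t^{2}-s^{2})(t^{2}-s^{2})\ge 0$, the band $t<s<2t$ is impossible and $s\in\{t,2t\}$ forces $k=0$; and the region $s>2t$ is carried into $0<s'<t'$ by $(t,s,k)\mapsto(s,2t,2k)$, using the identity $4s^{4}-5s^{2}(2t)^{2}+(2t)^{4}=4\bigl(4t^{4}-5t^{2}s^{2}+s^{4}\bigr)$, after cancelling the common factor of $s$ and $2t$ when it occurs. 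So it suffices to contradict the existence of a solution with $\gcd(s,t)=1$ and $0<s<t$; I would fix one with $t$ minimal and then exhibit a strictly smaller one.

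The key algebraic input is the factorisation
\[
k^{2}=(4t^{2}-s^{2})(t^{2}-s^{2})=(2t-s)(2t+s)(t-s)(t+s),
\]
together with the observation that $\gcd(4t^{2}-s^{2},\,t^{2}-s^{2})$ divides $3$: the difference of the two factors is $3t^{2}$, and $\gcd(4t^{2}-s^{2},t^{2})=\gcd(s^{2},t^{2})=1$. Comparing $3$-adic valuations, and using that both factors are positive because $0<s<t$, one gets
\[
4t^{2}-s^{2}=\varepsilon C^{2},\qquad t^{2}-s^{2}=\varepsilon E^{2}
\]
for a common $\varepsilon\in\{1,3\}$ and positive integers $C,E$. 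Eliminating shows that $E$ and $2E$ occur as legs in two companion Pythagorean-type relations: for $\varepsilon=3$ one gets $t^{2}+E^{2}=C^{2}$ and $s^{2}+(2E)^{2}=C^{2}$ (two triples with the same hypotenuse), and for $\varepsilon=1$ one gets $E^{2}+3t^{2}=C^{2}$ and $(2E)^{2}+3s^{2}=C^{2}$.

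Now I would run the standard Pythagorean-triple bookkeeping. From $\gcd(s,t)=1$ one checks $\gcd(s,E)=\gcd(t,E)=1$, so these triples are primitive and may be parametrised; substituting the parametrisation of one relation into the other and factorising produces, depending on the parity of $t$ and the value of $\varepsilon$, an equation of one of the shapes
\[
(3\alpha^{2}+\beta^{2})(\alpha^{2}+3\beta^{2})=\square ,\qquad (9\alpha^{2}+\beta^{2})(\alpha^{2}+\beta^{2})=\square ,
\]
or $(\alpha^{2}-\alpha\beta+\beta^{2})(\alpha^{2}+\alpha\beta+\beta^{2})=\square$, in coprime integers $\alpha,\beta$ of opposite parity, where in every case the two displayed factors are again coprime and hence individually perfect squares. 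Several branches close at once by a congruence: one of $3\alpha^{2}+\beta^{2},\ \alpha^{2}+3\beta^{2}$ is $\equiv 3\pmod 4$, which no square is; the branches with $t$ even die because $4t^{2}-s^{2}\equiv 7\pmod 8$ cannot be $\varepsilon$ times a square; and when $s$ is even a congruence modulo $16$ eliminates everything except the case $4\mid s$, $\varepsilon=1$. In each surviving branch the resulting two-squares system, e.g.\ $\alpha^{2}+\beta^{2}=\square$ together with $(3\alpha)^{2}+\beta^{2}=\square$ (so $\beta$ is a common leg of two triples whose other legs are in ratio $3$), becomes primitive after stripping a possible factor $3$ --- which, when present, already reduces a parameter --- and one further parametrisation returns a quartic $z^{2}=A^{4}+34A^{2}B^{2}+B^{4}$ or $z^{2}=9A^{4}-14A^{2}B^{2}+9B^{4}$, of the same genus-one type, but in strictly smaller integers. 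Thus the finitely many auxiliary quartic forms that can appear form a family each of whose members reduces to a smaller member; the descent terminates, contradicting minimality, and Theorem~\ref{thm:main-equation} follows.

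The step I expect to be the main obstacle is exactly this last one: verifying each coprimality and parity assertion at each parametrisation, checking that the produced solution is genuinely nontrivial and strictly smaller (so that minimality is actually contradicted), and confirming that \emph{every} auxiliary form arising in the procedure --- not merely the three displayed above --- feeds back into the descent rather than escaping it. The rest is routine; this is the point where the Pocklington-style argument is indispensable.
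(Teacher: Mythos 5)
Your opening moves are sound and genuinely different from the paper's: you factor $k^2=(4t^2-s^2)(t^2-s^2)$ directly, show the gcd of the two factors divides $3$, and split into $\varepsilon\in\{1,3\}$, whereas the paper instead uses the identity $\gamma^4+10\gamma^2(\alpha\beta)^2+9(\alpha\beta)^4=(4\alpha^4-\beta^4)^2$ (Dolan's lemma) to pass in one step to the single equation $z^2=x^4+10x^2y^2+9y^4$, and then runs a self-contained descent on that equation using Pythagorean parametrisations, the factorisation $xy=zt$ lemma, and a mod~$3$ case split. The derived companion relations $t^2+E^2=C^2$, $s^2+(2E)^2=C^2$ (resp.\ $E^2+3t^2=C^2$, $(2E)^2+3s^2=C^2$) are correct. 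The problem is everything after that.

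The descent is not closed, and as stated it cannot close in the way you describe. You assert that the surviving branches ``return a quartic $z^{2}=A^{4}+34A^{2}B^{2}+B^{4}$ or $z^{2}=9A^{4}-14A^{2}B^{2}+9B^{4}$ \dots in strictly smaller integers'' and that ``the descent terminates, contradicting minimality.'' But both of these auxiliary forms have nontrivial solutions: $1+34+1=36=6^2$ and $9-14+9=4=2^2$, i.e.\ $(A,B,z)=(1,1,6)$ and $(1,1,2)$. So no branch can be killed by declaring its terminal quartic insoluble, and an infinite-descent chain through this family is allowed to bottom out at these small solutions without any contradiction --- unless you verify that they do not lift back through your reduction maps to nontrivial solutions of the original equation, or that every step strictly decreases a well-defined size function on the union of all solution sets of all forms in the family. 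You have done neither; indeed you explicitly flag exactly this (``confirming that every auxiliary form \dots feeds back into the descent rather than escaping it'') as the unresolved ``main obstacle.'' That obstacle is the entire proof: in Pocklington/Cohn-style arguments on $x^4+dx^2y^2+y^4=z^2$, handling the exceptional small solutions of the auxiliary forms is precisely where the work lies. Until the case analysis is carried out in full, each produced solution is shown to be nontrivial and strictly smaller, and the small solutions above are accounted for, this is a plan rather than a proof.
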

We have to point out that the author has been informed that by substitution $T=t/s$, we obtain $4T^4 - 5T^2 + 1 = Z^2$, which is an elliptic curve of rank zero, and by using the theory of rational points of the elliptic curves we can show that this equation has only trivial solutions. However, we still
think that our elementary approach and the result could be of interest to researchers. We defer the proof of Theorem~\ref{thm:main-equation} to Section \ref{sec:main}. 
\section{Preliminaries}
\label{sec:prel}
We now briefly present several key lemmas for solving equations of the form $ax^4+bx^2y^2+cy^4=z^2$, for the proofs, see~\cite{Brown1989} and \cite{Polcklington1912}.



\begin{lemma}\label{thm:2}
All the positive integer solutions of the equation $xy=zt$ are given by the formulae
\[
x=ac, \quad y=bd, \quad z=ad, \quad t=bc.
\]
Moreover if we assume $(x,y)=(z,t)=1$, then $a, b, c$, and $d$ are pairwise coprime.
\end{lemma}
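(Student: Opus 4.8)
The plan is to prove the lemma by an explicit gcd construction, followed by a short verification of the coprimality claim. First I would note that the formulae always produce solutions: for any positive integers $a,b,c,d$ one has $(ac)(bd)=(ad)(bc)$, so it only remains to show that every positive integer solution arises in this way. So let $(x,y,z,t)$ be a positive integer solution of $xy=zt$. Set $a=(x,z)$ and write $x=ac$, $z=ad$ with $(c,d)=1$. Substituting into $xy=zt$ and cancelling $a$ gives $cy=dt$; since $d\mid cy$ and $(c,d)=1$, Euclid's lemma yields $d\mid y$, so $y=bd$ for some positive integer $b$. Plugging this back in gives $cbd=dt$, hence $t=bc$. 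This is exactly the asserted parametrization, with $a,b,c,d$ positive integers.

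For the ``moreover'' part I would in addition assume $(x,y)=(z,t)=1$ and check that the six pairwise gcds of $a,b,c,d$ all equal $1$. From the four identities $x=ac$, $y=bd$, $z=ad$, $t=bc$ one reads off the divisibilities $a\mid x$, $c\mid x$, $b\mid y$, $d\mid y$, $a\mid z$, $d\mid z$, $b\mid t$, $c\mid t$. Then: $(c,d)=1$ holds by construction; $(a,d)$ and $(b,c)$ each divide $(x,y)=1$; $(a,c)$ and $(b,d)$ each divide $(z,t)=1$; and $(a,b)$ divides $(x,y)$ (as well as $(z,t)$), hence is $1$. Therefore $a,b,c,d$ are pairwise coprime.

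The argument is essentially routine, so I do not expect a real obstacle; the only point needing a little care is the bookkeeping in the last step. One must pair up the factors so that each gcd lands inside $(x,y)$ or $(z,t)$, and in particular observe that $(a,c)$ cannot be handled via $(x,y)$ — since $a$ and $c$ both divide $x$ — but genuinely requires the hypothesis $(z,t)=1$ (and symmetrically for $(b,d)$). Beyond this observation, everything follows from the definition of the gcd and Euclid's lemma.
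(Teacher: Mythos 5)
Your proof is correct: the gcd construction $a=(x,z)$, $x=ac$, $z=ad$ with $(c,d)=1$, followed by Euclid's lemma to get $d\mid y$, is the standard "four numbers" argument, and your case-by-case check of the six pairwise gcds is complete and accurate. The paper itself does not prove this lemma (it defers to the cited references of Brown and Pocklington), and your argument is exactly the one found there, so there is nothing to compare beyond noting that your write-up fills a gap the paper leaves to the literature.
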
 

\begin{lemma}\label{lm:1}
Consider the equation $z^2=Dy^2+x^2$ with $(x,Dy)=1$ or $(z,Dy)=1$. Then there exist integers $p, q, m$, and $n$ such that $(pm,qn)=1$ and such that:
\begin{enumerate}
	\item if $Dy$ is odd, then $pq=D$, $2x=pm^2-qn^2$, $y=mn$ and $2z=pm^2+qn^2$;
	\item if $Dy$ and $y$ are even, then $pq=D$, $x=pm^2-qq^2$, $y=2mn$, and $z=pm^2+qn^2$.
\end{enumerate}
\end{lemma}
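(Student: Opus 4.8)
The plan is to convert the quadratic relation into a factorization and let unique factorization do the work. Writing the equation as $z^2-x^2=Dy^2$, i.e. $(z+x)(z-x)=Dy^2$, I would first record that under either coprimality hypothesis one has $\gcd(x,z)=1$: if a prime $r$ divided both $x$ and $z$, then $r\mid z^2-x^2=Dy^2$, hence $r\mid Dy$, contradicting $(x,Dy)=1$ (respectively $(z,Dy)=1$). This single observation feeds the whole argument.

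Next I would run the parity analysis that separates the two cases. If $Dy$ is odd, then $Dy^2$ is odd, so $z^2-x^2$ is odd and $z,x$ have opposite parity; hence $z+x$ and $z-x$ are both odd, and any common divisor of them divides $2z$ and $2x$ and, being odd, divides $\gcd(x,z)=1$, so $\gcd(z+x,z-x)=1$. If instead $y$ is even, then $Dy^2\equiv 0\pmod 4$ and the coprimality hypothesis forces $x$ (hence also $z$) to be odd, so $z\pm x$ are both even; setting $u=(z+x)/2$ and $v=(z-x)/2$ gives integers that are coprime (a common divisor divides $z$ and $x$) with $uv=D(y/2)^2$.

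The core is then a distribution step, identical in both cases once the coprime factorization is in hand. Take the odd case, where $(z+x)(z-x)=Dy^2$ with coprime factors. Every prime $r\mid D$ divides $Dy^2$ and hence exactly one of $z\pm x$, and comparing valuations ($v_r(z+x)$ or $v_r(z-x)$ equals $v_r(D)+2v_r(y)\ge v_r(D)$) shows $r^{v_r(D)}$ divides that same factor; collecting these prime powers produces $p\mid z+x$ and $q\mid z-x$ with $pq=D$. The cofactors $(z+x)/p$ and $(z-x)/q$ are then coprime and multiply to $y^2$, so by the standard fact that two coprime positive integers with a square product are individually squares, I obtain $(z+x)/p=m^2$ and $(z-x)/q=n^2$. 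This yields $2z=pm^2+qn^2$, $2x=pm^2-qn^2$, $y=mn$, and $\gcd(pm,qn)=1$ because $pm\mid z+x$ and $qn\mid z-x$. The even case is word-for-word the same with $u,v$ in place of $z\pm x$, giving $z=pm^2+qn^2$, $x=pm^2-qn^2$ and $y=2mn$.

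The main obstacle I anticipate is bookkeeping rather than any deep idea: I must track the factor of $2$ carefully so the two cases emerge with $y=mn$ versus $y=2mn$ and the halved versus unhalved formulas for $x,z$, and I must justify that the prime grouping yields exactly $pq=D$ (this is precisely where the valuation inequality $v_r(z\pm x)\ge v_r(D)$ is used; alternatively, Lemma~\ref{thm:2} can be invoked to package the coprime factorization of $Dy^2$). The only nontrivial input is the ``coprime factors of a square are squares'' lemma; everything else is elementary divisibility.
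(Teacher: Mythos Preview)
The paper does not actually prove this lemma: in Section~\ref{sec:prel} it states the result and refers the reader to \cite{Brown1989} and \cite{Polcklington1912} for a proof. So there is no ``paper's own proof'' to compare against. Your argument is the classical one and is essentially what one finds in those references: factor $z^2-x^2=Dy^2$, use the coprimality hypothesis to get $\gcd(z+x,z-x)=1$ (or, after halving, $\gcd(u,v)=1$ in the even case), distribute the prime powers of $D$ between the two coprime factors to obtain $pq=D$, and then invoke ``coprime factors of a square are squares'' to produce $m^2,n^2$. The bookkeeping you flag (tracking the factor $2$, and the valuation check $v_r(z\pm x)=v_r(D)+2v_r(y)\ge v_r(D)$ ensuring the full $r^{v_r(D)}$ lands on one side) is exactly the content of the proof, and you have it right. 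One small point worth making explicit when you write it up: to conclude that $(z+x)/p$ and $(z-x)/q$ are \emph{positive} squares you are implicitly using $z>|x|$, which follows from $z^2=x^2+Dy^2$ with $Dy^2>0$; and the coprimality $(pm,qn)=1$ follows cleanly from $pm^2=z+x$, $qn^2=z-x$ (resp.\ $u,v$) together with $\gcd(z+x,z-x)=1$.
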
	
\section{main result}
\label{sec:main}
\textbf{Proof of Theorem \ref{thm:main-equation}}

Suppose to the contrary that $(\alpha, \beta, \gamma)$ is a solution of the equation  $k^2=4t^4-5t^2s^2+s^4$. We can suppose $(\alpha, \beta)=1$, since in the case $(\alpha, \beta)=d>1$ we would have $\alpha=d\alpha_1$, $\beta=d\beta_1$, substitute them in the given equation we obtain $\gamma^2=d^4\alpha_1^4-5d^4\alpha_1^2\beta_1^2+d^4\beta_1^4$, which implies $d^2|\gamma$, so $\gamma=d^2\gamma_1$. Hence $(\alpha_1,\beta_1,\gamma_1)$ is a solution of the equation $k^2=4t^4-5t^2s^2+s^4$, where $(\alpha_1,\beta_1)=1$. On the other hand we can see if $(\alpha, \beta, \gamma)$ is a non-trivial solution of the given equation, then $(\gamma, \alpha \beta, |4\alpha^4-\beta^4|)$ is a non-trivial solution of the equation~$z^2=x^4+10x^2y^2+9y^4$ \cite[Lemma $1$]{Dolan2012} which would contradict Lemma~\ref{lm:4}. Therefore, the equation $k^2=4t^4-5t^2s^2+s^4$ has no solution in positive integers, moreover this fact implies Theorem \ref{thm:main}.


\begin{lemma}\label{lm:4}
The equation $z^2=x^4+10x^2y^2+9y^4$ has no solution in positive integers.
\end{lemma}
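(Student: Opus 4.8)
The plan is to run a descent argument on $z^2 = x^4 + 10x^2y^2 + 9y^4$ using the factorization $x^4 + 10x^2y^2 + 9y^4 = (x^2+y^2)(x^2+9y^2)$. First I would reduce to the case $(x,y)=1$: if a prime $p$ divides both $x$ and $y$, then $p^4 \mid z^2$, so $p^2 \mid z$, and dividing through produces a smaller solution; hence we may assume $\gcd(x,y)=1$ and, by the same token, take $(x,y,z)$ to be a non-trivial solution with $z$ minimal. Next I would compute $d=\gcd(x^2+y^2,\,x^2+9y^2)$. Since this gcd divides the difference $8y^2$ and also divides $x^2+y^2$, and since $\gcd(x,y)=1$, one checks $d \mid 8$ and in fact $d \in \{1,2,4,8\}$, with the exact value controlled by the parities of $x$ and $y$. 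The main case is $x,y$ of opposite parity, where $x^2+y^2$ is odd; there $d$ is odd, so $d=1$ and the two factors are coprime.

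In that coprime case, since their product is a square, each factor is itself a square: $x^2+y^2 = u^2$ and $x^2 + 9y^2 = v^2$ for positive integers $u,v$ with $uv=z$. Now $x^2+y^2=u^2$ is a Pythagorean relation, giving a standard parametrization of $(x,y,u)$; substituting this into $x^2+9y^2=v^2$ yields a new quartic relation in the Pythagorean parameters. The goal is to massage this into an instance of the \emph{same} equation $z^2 = x^4+10x^2y^2+9y^4$ (or a companion curve already known to have no non-trivial points, such as one coming from Lemma~\ref{lm:1} or the classical $a^4+b^4=c^2$), but with strictly smaller height, contradicting minimality. The parity cases where $d=2,4,8$ are handled analogously: one writes $x^2+y^2 = 2^i \cdot (\text{square})$ and $x^2+9y^2 = 2^j\cdot(\text{square})$ with $i+j$ matching the power of $2$ in $z^2$, applies Lemma~\ref{thm:2} to split the square factors compatibly with the coprimality of the odd parts, and again descends.

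The step I expect to be the main obstacle is the middle one: after parametrizing $x^2+y^2=u^2$ and imposing $x^2+9y^2=v^2$, one must recognize the resulting Diophantine condition as another copy of the original equation (or of $x^4-y^4=\square$, $x^4+4y^4=\square$, etc.) with provably smaller parameters, so that the descent actually terminates. This is where the bookkeeping with Lemma~\ref{thm:2} — tracking which parameter $a,b,c,d$ absorbs which prime, and verifying the pairwise-coprimality clause applies — does the real work, and where an inattentive choice of parametrization can produce an equation of the same size rather than a smaller one. I would organize the argument so that the height strictly drops in every branch, then invoke infinite descent (equivalently, the minimality of $z$) to conclude that no non-trivial solution exists.
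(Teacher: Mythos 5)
Your route differs from the paper's: you factor $x^4+10x^2y^2+9y^4=(x^2+y^2)(x^2+9y^2)$ multiplicatively, whereas the paper writes it as a sum of two squares, $(x^2-3y^2)^2+(4xy)^2$, uses a congruence modulo $3$ to force $3\mid xy$, and then runs a Pythagorean-triple analysis via Lemma~\ref{thm:2}, an appeal to the impossibility of $c^2=x_1^4+x_1^2y_1^2+y_1^4$, and a descent in one branch. The different starting factorization is not in itself a problem; the problem is that your argument stops exactly where all the work lies. You say yourself that the ``main obstacle'' is to recognize the equation obtained after parametrizing $x^2+y^2=u^2$ as a smaller instance of something known to be unsolvable, and you do not carry that step out, so what you have is a plan rather than a proof.

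There is also a concrete reason this step is not routine. In the coprime (opposite-parity) case, writing $x=p^2-q^2$, $y=2pq$ and substituting into $x^2+9y^2=v^2$ gives $v^2=p^4+34p^2q^2+q^4$; with the roles of $x$ and $y$ reversed one gets $v^2=9p^4-14p^2q^2+9q^4$. Both auxiliary quartics \emph{do} have nontrivial integer solutions, e.g.\ $(p,q,v)=(1,1,6)$ and $(1,1,2)$ respectively, which correspond to the degenerate solutions $x=0$ or $y=0$ of the original equation. So you cannot close the argument by citing a theorem that the auxiliary equation has ``only trivial solutions''; you must instead produce a genuine height-decreasing descent that respects the side conditions ($p,q$ coprime, of opposite parity, $p>q>0$) and verify the height strictly drops in every branch, including the both-odd branch where the gcd of the two factors is $2$ and one lands on the system $x^2+y^2=2u^2$, $x^2+9y^2=2v^2$. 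None of this bookkeeping is done, so the proposal as written does not establish the lemma.
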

\begin{proof}
We will prove this by contradiction. Suppose that there exists a solution $x, y$ and $z$ of the equation $z^2=x^4+10x^2y^2+9y^4$ in positive integers. Without loss of generality we can assume $(x,y)=1$. Indeed if $(x,y)=d>1$, writing $x=dx_1, y=dy_1$ for some integers $x_1$ and $y_1$. Substituting them in the equation $z^2=x^4+10x^2y^2+9y^4$ we obtain $z^2=d^4(x_1^4+10x_1^2y_1^2+9y_1^4)$, thus $d^2|z$ and so~$z=d^2z_1$. Therefore $x_1, y_1$, and $z_1$ is a solution of the given equation with $(x_1,y_1)=1$.

Observing that any perfect square leaves a residue of $0$ or $1$ module $3$. Taking remainders modulo $3$, we see that if $x$ and $y$ are not divisible by~$3$ we get $x^2 \equiv y^2 \equiv 1 \mod 3$, this implies
\[
z^2=x^4+10x^2y^2+9y^4 \equiv 1+ 10+ 9 \equiv 2 \mod 3,
\]
which is not possible. Thus $3$ must be a divisor of one of the numbers $x$ or $y$. We will consider two cases:


\textbf{Case~$1$: if $3|y$, $(x,y)=1$:}

The equation $z^2=x^4+10x^2y^2+9y^4$ is equivalent to $z^2=(x^2-3y^2)^2+(4xy)^2$. 
	
\noindent\textbf{Claim $(x^2-3y^2,4xy)=1$}
	
Let $(x^2-3y^2, 4xy)=d$, and let $p$ be a prime divisor of $d$. 
\begin{enumerate}
	\item If $p>3$, since $p|(x^2-3y^2)$ and $p|4xy$, $p$ must divide both $x$ and $y$ which contrasts with $(x,y)=1$.
	\item if $p=3$, since $3y^2\equiv 0 \mod 3$, $3$ must be a divisor of $x$ as well, while $(x,y)=1$.
\end{enumerate}
Therefore, $d$ must be a power of $2$. Now if $d=2^a$ and $a \ge 2$, then $4| x^2-3y^2$, it is equivalent to say $x^2 \equiv 3y^2 \mod 4$ and we can easily check that it is not possible. Indeed notice that squares can only be congruent to $0$ or $1$ modulo $4$. If $x$ and $y$ are both odd numbers we obtain $1 \equiv 3 \mod 4$, if $x$ odd and $y$ even we obtain $1 \equiv 0 \mod 4$, and finally if $x$ even and $y$ odd we obtain $0 \equiv 3 \mod 4$, which we get a contradiction in each case. Therefore $(x^2-3y^2,4xy)\le 2$. 
In the next step, we show that $2$ also is not possible. Suppose $(x^2-3y^2,4xy)=2$, then by Pythagorean triples formula there exist two coprime positive integers $a$ and $b$, of opposite parity, such that\footnote{since $a$ and $b$ are of opposite parity, $a^2-b^2$ is odd, hence $4xy= 2(a^2-b^2)$ is not possible.}
\[
x^2-3y^2=2(a^2-b^2),\quad 4xy=2(2ab), \quad z=2(a^2+b^2),
\]
which implies that $xy=ab$. Therefore, either $x$ or $y$ must be even. If $x$ is even, since $2|x^2-3y^2$, $y$  must be even as well, this is a contradiction. The same argument shows that $y$ also can not be even. Thus $(x^2-3y^2,4xy)=1$ and by primitive Pythagorean triples formula there exist two coprime positive integers $m$ and $n$, of opposite parity, such that
\begin{equation}\label{eq:1}
x^2-3y^2=m^2-n^2, \quad 4xy= 2mn, \quad z=m^2+n^2.
\end{equation}
In particular, the numbers $x$ and $y$ can not be odd simultaneously, otherwise, $x^2-3y^2$ is even which implies~$(x^2-3y^2, 4xy) \not=1$, and we have seen it is not possible. Now we will show that both numbers $x$ and $y$ can not have opposite parity. 
	
\textbf{$x$ odd and $y$ even}
	
First if $x$ is odd and $y$ even, then the number $x^2-3y^2$ and hence $m^2-n^2$ is of the form $4k+1$, which implies $m$ is odd and $n$ is even. Since $4xy=2mn$ and $n=2n_1$, we have $4xy=4mn_1$ so $xy=mn_1$. Therefore by Lemma \ref{thm:2} there exist natural numbers $a,b,c$ and $d$ 
such that 
\[
x=ac, \quad y=bd, \quad m=ad, \quad n_1=bc,
\]
where from $(x,y)=(m,n_1)=1$ we conclude any two of the numbers $a, b, c$, and $d$ are pairwise coprime. Since $x$ and $m$ are odd numbers we have $a,c$ and $d$ are odd and since $y$ is even we have $b$ is even. Replacing $x=ac$, $y=bd$, $m=ad$ and $n_1=bc$ in the equation $x^2-3y^2=m^2-n^2$, we obtain $(a^2+4b^2)c^2=(a^2+3b^2)d^2$. 

Let $r=(a^2+4b^2, a^2+3b^2)$, then $r|a^2+3b^2$ and $r|a^2+4b^2$, hence $r|b^2$. On the other hand from $r| 4(a^2+3b^2)-3(a^2+4b^2)$ we obtain $r|a^2$. Since $(a,b)=1$ we have $r=1=(a^2+4b^2, a^2+3b^2)$. Hence the equation $(a^2+4b^2)c^2=(a^2+3b^2)d^2$ together $(c,d)=1$ imply 
\[
a^2+(2b)^2=d^2, \quad a^2+3b^2=c^2.
\] 
Since $(a,b)=1$ and $a$ is odd, we have $(a,2b)=1$, and by primitive Pythagorean triples formula there exist two coprime integers $x_1$ and $y_1$ of opposite parity such that $2b=2x_1y_1$ and $a=x_1^2-y_1^2$, substituting them in the equation $a^2+3b^2=c^2$, we obtain $(x_1^2-y_1^2)^2+3(x_1y_1)^2=c^2$, which implies a non-trivial solution of the equation $c^2=x_1^4+x_1^2y_1^2+y_1^2$ which would contradict \cite[Theorem $2.4.1$]{Korpal2015}.
	
\textbf{$x$ even and $y$ odd}
	
Now we show that the case $x$ even and $y$ odd also has no solution. Suppose the contrary, let $x$ be an even number and $y$ an odd number such that the pair $x, y$ gives a non-trivial solution of the equation $z^2=x^4+10x^2y^2+9y^4$. By the least integer principle we may assume~$xy$ has the least possible value. Since $x$ is even and $y$ is odd, from equation~\eqref{eq:1}  we obtain $x^2-3y^2$ and hence $m^2-n^2$ is of the form $4k+1$, which shows that $m$ is odd and $n$ is even.  Since $4xy=2mn$ and $n=2n_1$, we have $xy=mn_1$. Therefore by Lemma~\ref{thm:2} there exist pairwise coprime natural numbers $a,b,c$ and $d$ such that 
\[
x=ac, \quad, y=bd, \quad m=ad, \quad n_1=bc,
\]
since $y$ and $m$ are odd numbers, thus $a, b$ and $d$ are odd numbers and since $x$ is even we have $a$ is even. Substituting these values in the equation  $x^2-3y^2=m^2-n^2$ and if we continue as above we obtain 
\[
a^2+(2b)^2=d^2, \quad a^2+3b^2=c^2.
\]
In particular, since $(a,c)=(b,c)=(a,b)=1$, therefore $(a,3b)=1$ in particular $3b$ is odd. Applying Lemma \ref{lm:1} for the equation $a^2+3b^2=c^2$ we obtain two positive integers $x_1$ and $y_1$ such that $2a=x_1^2-3y_1^2$ and $b=x_1y_1$. Substituting them in the equation $d^2=a^2+(2b)^2$ implies $d^2=x_1^4+10x_1^2y_1^2+9y_1^4$. On the other hand
\[
x_1y_1=b \le bd=y< xy.
\] 
This contradicts the assumed minimality of $xy$. Thus the equation $z^2=x^4+10x^2y^2+9y^4$ has no solution $x, y$, and $z$ with $3|y$.
	
\textbf{Case $2$: if $3|x$ and $(x,y)=1$}

Let $x,y$, and $z$ be a solution of the equation $z^2=x^4+10x^2y^2+9y^4$, such that~$3|x$. Writing~$x=3x_1$ we obtain $z^2=81x_1^4+90x_1^2y^2+9y^4$, thus $9|z$ and so $z=3z_1$. Dividing both sides by $9$ we get $z_1^2=y^4+10y^2x_1^2+9x^4_1$ which means $(t,s,k)=(y,x_1,z_1)$ is a solution of the equation~$k^2=t^4+10t^2s^2+9s^4$ in positive integers.
 On the other hand, as we have seen either $x_1$ or $y$ must be divisible by $3$, since~$(3,y)=1$ we conclude $3|x_1$, where by the argument of case $1$ it is impossible. This complete the proof.
\end{proof}
\section{Acknowledgement}
The author was supported by Grant Artin Approximation, Arc-R{\"a}ume, Aufl{\H o}sung von Singularit{\"a}ten FWF P-31336 and the Austrian Science Fund FWF Project P~30405-N32. I would like to thank Matteo Gallet for his feedback on an earlier draft of the paper.

\providecommand{\bysame}{\leavevmode\hbox to3em{\hrulefill}\thinspace}
\providecommand{\MR}{\relax\ifhmode\unskip\space\fi MR }
\providecommand{\MRhref}[2]{%
	\href{http://www.ams.org/mathscinet-getitem?mr=#1}{#2}
}
\providecommand{\href}[2]{#2}

\end{document}